\newcommand{\R}{\mathbb R}
\newcommand{\E}{\mathbb E}
\renewcommand{\span}{\mathrm{span}}
\newcommand{\tr}{\mathrm{tr}}
\newcommand{\ds}{\displaystyle}
\newtheorem{thm}{Theorem}[section]
\theoremstyle{definition}
\theoremstyle{remark}
\newtheorem{rem}[thm]{Remark}
\begin{document}

\title[Meridian Surfaces with Parallel Normalized Mean Curvature Vector in $\E^4_2$] {Meridian Surfaces with Parallel Normalized Mean Curvature Vector Field in  Pseudo-Euclidean 4-space with Neutral Metric}

\author{ Bet\"{u}l Bulca and Velichka Milousheva}
\address{Uluda\u{g} University, Department of Mathematics, 16059 Bursa, Turkey}
\email{bbulca@uludag.edu.tr}
\address{Institute of Mathematics and Informatics, Bulgarian Academy of Sciences,
Acad. G. Bonchev Str. bl. 8, 1113, Sofia, Bulgaria;   "L. Karavelov"
Civil Engineering Higher School, 175 Suhodolska Str., 1373 Sofia,
Bulgaria} \email{vmil@math.bas.bg}

\subjclass[2010]{53A35, 53B30, 53B25}
\keywords{Meridian surfaces, parallel mean curvature vector, parallel normalized mean curvature vector, pseudo-Euclidean space with neutral metric}

\begin{abstract}
We construct a special class of Lorentz surfaces in the
pseudo-Euclidean 4-space with neutral metric which are one-parameter systems of meridians of
 rotational hypersurfaces with timelike or spacelike axis and call them meridian surfaces.
 We give the complete classification of the meridian surfaces with parallel mean curvature vector field. We also classify the meridian surfaces with parallel normalized mean curvature vector. We show that in the family of the meridian surfaces there exist  Lorentz surfaces which have parallel normalized mean curvature vector field but not parallel mean curvature vector.
\end{abstract}

\maketitle

\section{Introduction}

A basic class of surfaces in Riemannian and pseudo-Riemannian geometry are surfaces with parallel mean curvature vector field, since they are critical points of some natural functionals and  play important role in differential geometry,  the theory of harmonic maps, as well as in physics.
The classification of surfaces with parallel mean curvature vector field in Riemannian space forms was given by Chen \cite{Chen1} and Yau  \cite{Yau}. Recently, spacelike surfaces with parallel mean
curvature vector field  in pseudo-Euclidean spaces with arbitrary codimension  were classified in \cite{Chen1-2} and  \cite{Chen1-3}.
Lorentz surfaces with parallel mean curvature vector field in arbitrary pseudo-Euclidean space $\E^m_s$ are studied in \cite{Chen-KJM} and \cite{Fu-Hou}. A nice survey on classical and recent results on
submanifolds with parallel mean curvature vector in Riemannian manifolds
as well as in pseudo-Riemannian manifolds is presented in \cite{Chen-survey}.

The class of surfaces with parallel mean curvature vector field is naturally extended to the class of surfaces with parallel
normalized mean curvature vector field.
A submanifold in a Riemannian manifold is said to have parallel
normalized mean curvature vector field  if the mean curvature vector is non-zero and the
unit vector in the direction of the mean curvature vector is parallel in the normal
bundle  \cite{Chen-MM}.
It is well known that submanifolds with non-zero parallel mean curvature vector field also have parallel normalized mean curvature vector field.
 But the condition to have parallel normalized mean curvature vector field is  weaker than the condition to have parallel mean curvature vector field.
For example, every surface in the Euclidean 3-space has parallel normalized mean
curvature vector field but in the 4-dimensional Euclidean space, there exist abundant examples of surfaces
which lie fully in $\E^4$ with parallel normalized mean
curvature vector field, but not with parallel mean curvature vector field.
In \cite{Chen-MM} it is proved that every analytic surface  with parallel normalized mean curvature vector in the Euclidean space
$\E^m$ must either lie in a 4-dimensional space $\E^4$ or in a hypersphere of $\E^m$ as a minimal surface.

In the pseudo-Euclidean space with neutral metric $\E^4_2$ the study of Lorentz surfaces with  parallel normalized mean
curvature vector field, but not  parallel mean curvature vector field, is still an open problem.

In the present paper we construct special families of 2-dimensional Lorentz surfaces in  $\E^4_2$  which lie on rotational hypersurfaces with timelike or spacelike axis and call them
 meridian surfaces. Depending on the type of the spheres in $\E^3_1$ (resp. $\E^3_2$) and the casual character of the spherical curves, we distinguish three types of Lorentz meridian surfaces in $\E^4_2$. These surfaces are analogous to  the meridian surfaces in the Euclidean space $\E^4$ and the Minkowski space $\E^4_1$, which are defined and studied in \cite{GM2}, \cite{GM-BKMS}, and \cite{GM6}, \cite{GM-MC}, respectively.

In Theorems  \ref{T:parallel-H-a}, \ref{T:parallel-H-b}, and \ref{T:parallel-H-c}  we give the complete classification of all Lorentz meridian surfaces (of these three types) which have parallel mean curvature vector field. We also classify the meridian surfaces with parallel normalized mean curvature vector field (Theorems \ref{T:parallel-H0-a}, \ref{T:parallel-H0-b}, and \ref{T:parallel-H0-c}).
In the family of the meridian surfaces  we find examples of Lorentz surfaces in $\E^4_2$  which have parallel normalized mean curvature vector field but not parallel mean curvature vector field.

\section{Preliminaries}

 Let $\E^4_2$ be the pseudo-Euclidean 4-dimensional space  with the canonical pseudo-Euclidean   metric of index 2
given in local coordinates by $$\widetilde{g} = dx_1^2 + dx_2^2 - dx_3^2 - dx_4^2,$$
where $(x_1, x_2, x_3, x_4)$ is a rectangular coordinate system of $\E^4_2$. We denote by $ \langle ., . \rangle$ the indefinite inner scalar product with respect to  $\widetilde{g}$.
Since $\widetilde{g}$ is an indefinite
metric,  a vector $v \in \E^4_2$ can have one of
the three casual characters: it can be \textit{spacelike} if $\langle v, v \rangle >0$
or $v=0$, \textit{timelike} if $\langle v, v \rangle<0$, and \textit{null} (\textit{lightlike}) if $\langle v, v \rangle =0$ and $v\neq 0$.

We use the following denotations:
\begin{equation*}
\begin{array}{l}
\vspace{2mm}
\mathbb{S}^3_2(1) =\left\{V\in \E^4_2: \langle V, V \rangle =1 \right\}; \\
\vspace{2mm}
\mathbb{H}^3_1(-1) =\left\{ V\in \E^4_2: \langle V, V \rangle = -1\right\}.
\end{array}
\end{equation*}
The space $\mathbb{S}^3_2(1)$ is known as the de Sitter space, and the
space $\mathbb{H}^3_1(-1)$ is the hyperbolic space (or the anti-de Sitter space) \cite{O'N}.

Given a surface $M$ in $\E^4_2$, we denote by $g$ the induced metric of  $\widetilde{g}$ on $M$. A surface $M$ in $\E^4_2$
 is called \emph{Lorentz}  if the
induced  metric $g$ on $M$ is Lorentzian. Thus, at each point $p\in M$ we have the following decomposition
$$\E^4_2 = T_pM \oplus N_pM$$
with the property that the restriction of the metric
onto the tangent space $T_pM$ is of
signature $(1,1)$, and the restriction of the metric onto the normal space $N_pM$ is of signature $(1,1)$.

Denote by $\nabla $ and $\widetilde{\nabla }$ the Levi-Civita connections
of $M$ and $\mathbb{E}_{2}^{4}$, respectively. For any vector fields $x,y$
tangent to $M$ the Gauss formula is given by
\begin{equation*}
\widetilde{\nabla}_xy=\nabla_xy + h(x,y)
\end{equation*}
where $h$ is the second fundamental form of $M$. Let $D$ denotes the normal
connection on the normal bundle of $M$. Then for any normal vector
field $\xi $ and any tangent vector field $x$  the Weingarten formula
is given by
\begin{equation*}
\widetilde{\nabla}_x \xi = - A_{\xi }x + D_x\xi,
\end{equation*}
where $A_{\xi }$ is the shape operator with respect to $\xi$.

The mean curvature vector  field $H$ of $M$ in $\E^4_2$
is defined as $H = \ds{\frac{1}{2}\,  \tr\, h}$.
A surface $M$  is called \emph{minimal} if its mean curvature vector vanishes identically, i.e. $H =0$.
A natural extension of minimal surfaces are quasi-minimal surfaces.
A surface $M$  is called \emph{quasi-minimal} (or \textit{pseudo-minimal}) if its
mean curvature vector is lightlike at each point, i.e. $H \neq 0$ and $\langle H, H \rangle =0$  \cite{Rosca}.

A normal vector field $\xi$ on $M$ is called \emph{parallel in the normal bundle} (or simply \emph{parallel}) if $D{\xi}=0$ holds identically \cite{Chen2}. A surface $M$ is said to have \emph{parallel mean curvature vector field} if its mean curvature vector $H$
satisfies $D H =0$ identically.

Surfaces for which the mean curvature vector field $H$ is non-zero, $\langle H, H \rangle \neq 0$, and  there exists a unit vector field $b$ in the direction of the mean curvature vector
 $H$, such that $b$ is parallel in the normal
bundle, are called surfaces with \textit{parallel normalized mean curvature vector field} \cite{Chen-MM}.
It is easy to see  that if $M$ is a surface  with non-zero parallel mean curvature vector field $H$ (i.e. $DH = 0$),
then $M$ is a surface with parallel normalized mean curvature vector field, but the converse is not true in general.
It is true only in the case $\Vert H \Vert = const$.

\section{Construction of meridian surfaces  in $\E^4_2$}

In \cite{GM2} G. Ganchev and the second author  constructed a family of surfaces lying on a
standard rotational hypersurface in the Euclidean 4-space $\mathbb{E}^{4}$. These surfaces are one-parameter systems of meridians of
the rotational hypersurface, that is why they called them \textit{meridian surfaces}. In \cite{GM2}  and \cite{GM-BKMS}
they gave the classification of  the meridian surfaces with constant Gauss curvature, with constant mean curvature, Chen meridian surfaces and  meridian surfaces with parallel normal bundle. The meridian surfaces in $\E^4$ with pointwise 1-type Gauss map are classified in
\cite{ABM}.
In \cite{GM6} and \cite{GM-MC} they used the  idea from the Euclidean case
to construct special families of two-dimensional spacelike surfaces lying on
rotational hypersurfaces in $\mathbb{E}_{1}^{4}$ with timelike or spacelike
axis and gave the classification of meridian surfaces from  the same basic classes.
The meridian surfaces in $\E^4_1$ with pointwise 1-type Gauss map are classified in
\cite{AM}.

Following the idea from the Euclidean and Minkowski spaces, in \cite{BM} we constructed Lorentz meridian surfaces in the pseudo-Euclidean 4-space $\E^4_2$  as one-parameter systems of meridians of rotational hypersurfaces with timelike or
spacelike axis. We gave the classification of quasi-minimal meridian  surfaces and meridian surfaces with constant mean curvature (CMC-surfaces).
Here we shall present briefly the construction.

\subsection{Lorentz meridian surfaces lying on a rotational hypersurface with timelike axis}

Let $Oe_1 e_2 e_3 e_4$ be a fixed orthonormal coordinate system in $\E^4_2$, i.e. $\langle e_1,e_1 \rangle
=\langle e_2,e_2 \rangle =1, \langle e_3,e_3 \rangle = \langle e_4,e_4 \rangle = -1$. We shall consider a
 rotational hypersurface with timelike axis $Oe_4$. Similarly, one can consider a rotational hypersurface with axis $Oe_3$.

In the Minkowski space $\E^3_1 = \span\left \{ e_{1},e_{2},e_{3}\right \}$ there are two types of  two-dimensional spheres, namely the  pseudo-sphere  $\mathbb{S}^2_1(1) =\left\{V\in \E^3_1: \langle V, V \rangle = 1\right \}$, i.e. the de Sitter space, and the pseudo-hyperbolic sphere $\mathbb{H}^2_1(-1)=\left\{V \in  \E^3_1: \langle V, V \rangle = - 1\right \}$, i.e. the anti-de Sitter space. So, we can consider two types of rotational hypersurfaces about the axis  $Oe_4$.

Let  $f=f(u)$, $g=g(u)$ be smooth functions, defined in an interval $I\subset \R$. The first type rotational
hypersurface $\mathcal{M}^I$ in $\E^4_2$, obtained by the rotation of the
meridian curve $m:u\rightarrow (f(u),g(u))$ about the $Oe_4$-axis, is
parametrized as follows:
\begin{equation*}
\mathcal{M}^I:Z(u,w^1,w^2) = f(u)\left(\cosh w^1 \cos w^2 \,e_1 +   \cosh w^1 \sin w^2 \,e_2 + \sinh w^1 \,e_3\right) + g(u) \,e_4.
\end{equation*}
Note that $l^I(w^1,w^2) = \cosh w^1 \cos w^2 \,e_1 + \cosh w^1 \sin w^2 \,e_2 + \sinh w^1 \,e_3$ is the unit position vector of the sphere
$\mathbb{S}^2_1(1)$ in $\E^3_1 = \span\left \{ e_{1},e_{2},e_{3}\right \}$ centered at the origin $O$. The parametrization of $\mathcal{M}^I$ can be written as:
\begin{equation*}
\mathcal{M}^I:Z(u,w^1,w^2) = f(u) l^I(w^1,w^2) + g(u) \,e_4.
\end{equation*}

The second type rotational
hypersurface $\mathcal{M}^{II}$ in $\E^4_2$, obtained by the rotation of the
meridian curve $m$ about the axis $Oe_4$, is given by the following
parametrization:
\begin{equation*}
\mathcal{M}^{II}:Z(u,w^1,w^2) = f(u)\left(\sinh w^1 \cos w^2 \,e_1 +  \sinh w^1 \sin w^2 \,e_2 + \cosh w^1 \,e_3\right) + g(u) \,e_4.
\end{equation*}
If we denote by  $l^{II}(w^1,w^2) = \sinh w^1 \cos w^2 \,e_1 +  \sinh w^1 \sin w^2 \,e_2 + \cosh w^1 \,e_3$ the unit position vector of the hyperbolic sphere
$\mathbb{H}^2_1(-1)$ in $\E^3_1 = \span\left \{ e_{1},e_{2},e_{3}\right \}$ centered at the origin $O$, then the parametrization of
$\mathcal{M}^{II}$ can be written as:
\begin{equation*}
\mathcal{M}^{II}:Z(u,w^1,w^2) = f(u) l^{II}(w^1,w^2) + g(u) \,e_4.
\end{equation*}

We shall construct Lorentz surfaces in $\E^4_2$ which are one-parameter systems of meridians of the hypersurface $\mathcal{M}^I$ or $\mathcal{M}^{II}$.

\vskip 3mm
\textbf{Meridian surfaces on  $\mathcal{M}^I$}:

Let $w^1 = w^1(v)$, $w^2=w^2(v), \,\, v \in J, \, J \subset \R$. Then  $c: l = l(v) =  l^I(w^1(v),w^2(v))$ is a smooth curve on  $\mathbb{S}^2_1(1)$.  We consider the two-dimensional
 surface $\mathcal{M}'$ lying on $\mathcal{M}^I$ and defined by:
\begin{equation}  \label{E:Eq-1}
\mathcal{M}': z(u,v) = f(u)\, l(v) + g(u) \,e_4, \quad u \in I, \, v \in J.
\end{equation}
The surface $\mathcal{M}'$, defined by \eqref{E:Eq-1}, is a
one-parameter system of meridians of $\mathcal{M}^I$, so we call
it a  \emph{meridian surface on $\mathcal{M}^I$}.

The tangent space of $\mathcal{M}'$ is spanned by the vector fields
$$z_u = f'(u) \,l(v) + g'(u) \,e_4; \qquad  z_v = f(u) \,l'(v),$$
so, the coefficients of the first fundamental form of $\mathcal{M}'$  are
$$E = \langle z_u, z_u \rangle =  f'^2 - g'^2; \quad F = \langle z_u, z_v \rangle = 0; \quad  G = \langle z_v, z_v \rangle = f^2 \langle l',l'\rangle.$$
Since we are interested in Lorentz surfaces, in the case   the spherical curve $c$ is spacelike, i.e. $\langle l',l'\rangle>0$, we take the meridian curve $m$ to be timelike,  i.e. $f'^2 - g'^2 <0$; and if
$c$ is timelike, i.e. $\langle l',l'\rangle<0$ , we take $m$ to be spacelike, i.e.  $f'^2 - g'^2 >0$.

\vskip 2mm
\textbf{Case (a)}: Let $\langle l',l'\rangle =1$, i.e. $c$ is spacelike.
We denote by  $t(v) = l'(v)$  the tangent vector field of $c$. Since $\langle t(v), t(v) \rangle = 1$,
 $\langle l(v), l(v) \rangle = 1$, and $\langle t(v), l(v) \rangle = 0$,
 there exists a unique (up to a sign)
 vector field $n(v)$, such that
$\{l(v), t(v), n(v)\}$ is an orthonormal frame field in $\E^3_1$ (note that $\langle n(v), n(v) \rangle = -1$). With respect to this
 frame field we have the following Frenet formulas of $c$ on $\mathbb{S}^2_1(1)$:
\begin{equation} \label{E:Eq-2}
\begin{array}{l}
\vspace{2mm}
l' = t;\\
\vspace{2mm}
t' = - \kappa \,n - l;\\
\vspace{2mm} n' = - \kappa \,t,
\end{array}
\end{equation}
 where $\kappa (v)= \langle t'(v), n(v) \rangle$ is the spherical curvature of $c$ on  $\mathbb{S}^2_1(1)$.

Without loss of generality we assume that $f'^2 - g'^2 = -1$. Then for
the coefficients of the first fundamental form we have $E = -1; \, F = 0; \, G = f^2(u)$.
Hence, in this case the meridian surface, defined by \eqref{E:Eq-1}, is a Lorentz surface in $\E^4_2$.
We denote this surface by $\mathcal{M}'_a$.

Now we consider the unit tangent vector fields  $X = z_u,\,\, Y = \ds{\frac{z_v}{f} = t}$,
which satisfy $\langle X,X \rangle = -1$,  $\langle Y,Y \rangle = 1$ and $\langle X,Y \rangle =0$, and the
following  normal vector fields:
\begin{equation} \label{E:Eq-2-2}
n_1 = n(v); \qquad n_2 = g'(u)\,l(v) + f'(u) \, e_4.
\end{equation}
Thus we obtain a frame field $\{X,Y, n_1, n_2\}$ of $\mathcal{M}'_a$, such that $\langle n_1, n_1 \rangle = -1$,
$\langle n_2, n_2 \rangle = 1$, $\langle n_1, n_2 \rangle =0$.

Taking into account \eqref{E:Eq-2} we get:
\begin{equation} \label{E:Eq-3}
\begin{array}{l}
\vspace{2mm}
h(X,X) = \kappa_m\,n_2; \\
\vspace{2mm}
h(X,Y) =  0; \\
\vspace{2mm}
h(Y,Y) = - \frac{\kappa}{f}\,n_1 - \frac{g'}{f} \, n_2,
\end{array}
\end{equation}
where $\kappa_m$ denotes the curvature of the meridian curve $m$, i.e.
$\kappa_m (u)= f'' g' - f' g''$.
Formulas \eqref{E:Eq-3}  and the equality $f'^2 - g'^2 = -1$  imply that the Gauss curvature $K$ and the normal mean curvature vector field $H$ of the meridian surface $\mathcal{M}'_a$ are given, respectively by
\begin{equation} \label{E:Eq-Ka}
K=\frac{f''}{f};
\end{equation}
\begin{equation} \label{E:Eq-Ha}
H=-\frac{\kappa}{2f} \,n_1 - \frac{f f''+(f')^2 + 1}{2f \sqrt{f'^2+1}} \,n_2.
\end{equation}

\vskip 2mm
\textbf{Case (b)}: Let $\langle l',l'\rangle = -1$, i.e. $c$ is timelike. In this case we assume that $f'^2 - g'^2 = 1$.
We denote  $t(v) = l'(v)$ and consider an orthonormal frame field $\{l(v), t(v), n(v)\}$ of $\E^3_1$, such that
$\langle l, l \rangle = 1$, $\langle t, t \rangle = -1$,
$\langle n, n \rangle = 1$. Then  we have the following Frenet formulas of $c$ on $\mathbb{S}^2_1(1)$:
\begin{equation} \label{E:Eq-2-b}
\begin{array}{l}
\vspace{2mm}
l' = t;\\
\vspace{2mm}
t' =  \kappa \,n + l;\\
\vspace{2mm} n' =  \kappa \,t,
\end{array}
\end{equation}
 where $\kappa (v)= \langle t'(v), n(v) \rangle$ is the spherical curvature of $c$ on  $\mathbb{S}^2_1(1)$.

In this case the coefficients of the first fundamental form are $E = 1; \, F = 0; \, G = -f^2(u)$.
We denote the meridian surface in this case  by $\mathcal{M}'_b$.

Again we consider the unit tangent vector fields  $X = z_u,\,\, Y = \ds{\frac{z_v}{f} = t}$,
which satisfy $\langle X,X \rangle = 1$,  $\langle Y,Y \rangle = -1$ and $\langle X,Y \rangle =0$, and the
following  normal vector fields:
$$n_1 = n(v); \qquad n_2 = g'(u)\,l(v) + f'(u) \, e_4,$$
 satisfying $\langle n_1, n_1 \rangle = 1$,
$\langle n_2, n_2 \rangle = -1$, $\langle n_1, n_2 \rangle =0$.

Using \eqref{E:Eq-2-b} we get:
\begin{equation} \label{E:Eq-3-b}
\begin{array}{l}
\vspace{2mm}
h(X,X) = \kappa_m\,n_2; \\
\vspace{2mm}
h(X,Y) =  0; \\
\vspace{2mm}
h(Y,Y) = \frac{\kappa}{f}\,n_1 -\frac{g'}{f} \, n_2,
\end{array}
\end{equation}
where $\kappa_m$ is the curvature of the meridian curve $m$, which in the case of a spacelike curve is given by the formula
$\kappa_m (u)=  f' g'' - f'' g'$.
Formulas \eqref{E:Eq-3-b}  and the equality $f'^2 - g'^2 = 1$  imply that
the Gauss curvature $K$ and the normal mean curvature vector field $H$ of the meridian surface $\mathcal{M}'_b$ are expressed  as follows:
\begin{equation*} \label{E:Eq-Kb}
K=-\frac{f''}{f};
\end{equation*}
\begin{equation} \label{E:Eq-Hb}
H=-\frac{\kappa}{2f} \,n_1 + \frac{f f''+(f')^2 - 1}{2f \sqrt{f'^2-1}} \,n_2.
\end{equation}

\vskip 3mm
\textbf{Meridian surfaces on  $\mathcal{M}^{II}$}:

Now we shall construct meridian surfaces lying on the rotational hypersurface of second type  $\mathcal{M}^{II}$.
Let $c: l = l(v) =  l^{II}(w^1(v),w^2(v))$ be a smooth curve on the hyperbolic sphere $\mathbb{H}^2_1(-1)$, where $w^1 = w^1(v)$, $w^2=w^2(v), \,\, v \in J, \, J \subset \R$.  We consider the two-dimensional
 surface $\mathcal{M}''$ lying on $\mathcal{M}^{II}$ and defined by:
\begin{equation}  \label{E:Eq-4}
\mathcal{M}'': z(u,v) = f(u)\, l(v) + g(u) \,e_4, \quad u \in I, \, v \in J.
\end{equation}
The surface $\mathcal{M}''$, defined by \eqref{E:Eq-4}, is a
one-parameter system of meridians of $\mathcal{M}^{II}$, so we call
it a  \emph{meridian surface on $\mathcal{M}^{II}$}.

The tangent space of $\mathcal{M}''$ is spanned by the vector fields
$$z_u = f'(u) \,l(v) + g'(u) \,e_4; \qquad  z_v = f(u) \,l'(v)$$
and the coefficients of the first fundamental form of $\mathcal{M}''$  are
$$E = \langle z_u, z_u \rangle =  -(f'^2 + g'^2); \quad F = \langle z_u, z_v \rangle = 0; \quad  G = \langle z_v, z_v \rangle = f^2 \langle l',l'\rangle.$$
Since $c$ is a curve lying on $\mathbb{H}^2_1(-1)$, we have $\langle l,l\rangle = -1$, so $t = l'$ satisfies $\langle t, t \rangle = 1$.
We suppose that $f'^2 + g'^2 =1$. Hence, the coefficients of the first fundamental form of $\mathcal{M}''$  are
$E = -1; \, F =  0; \,  G = f^2$.

Now, we have an  orthonormal frame field $\{l(v), t(v), n(v)\}$ of $c$ satisfying the conditions
$\langle l, l \rangle = -1$, $\langle t, t \rangle = 1$,
$\langle n, n \rangle = 1$, and the  following Frenet formulas of $c$ on $\mathbb{H}^2_1(-1)$ hold true:
\begin{equation} \label{E:Eq-2-c}
\begin{array}{l}
\vspace{2mm}
l' = t;\\
\vspace{2mm}
t' =  \kappa \,n + l;\\
\vspace{2mm} n' = - \kappa \,t,
\end{array}
\end{equation}
 where $\kappa (v)= \langle t'(v), n(v) \rangle$ is the spherical curvature of $c$ on  $\mathbb{H}^2_1(-1)$.

We consider the following orthonormal frame field of  $\mathcal{M}''$:
$$X = z_u; \quad Y = \frac{z_v}{f} = t; \quad n_1 = n(v); \quad n_2 = -g'(u)\,l(v) + f'(u) \, e_4.$$
This frame field satisfies $\langle X, X \rangle = -1$,
$\langle Y, Y \rangle = 1$, $\langle X, Y \rangle =0$, $\langle n_1, n_1 \rangle = 1$,
$\langle n_2, n_2 \rangle = -1$, $\langle n_1, n_2 \rangle =0$.
Using  \eqref{E:Eq-2-c} we get:
\begin{equation*} \label{E:Eq-3-c}
\begin{array}{l}
\vspace{2mm}
h(X,X) = \kappa_m\,n_2; \\
\vspace{2mm}
h(X,Y) =  0; \\
\vspace{2mm}
h(Y,Y) =  \frac{\kappa}{f}\,n_1 - \frac{g'}{f} \, n_2,
\end{array}
\end{equation*}
where $\kappa_m = f' g'' -  f'' g'$. The Gauss curvature $K$ and the normal mean curvature vector field $H$ of $\mathcal{M}''$ are given, respectively  by
\begin{equation*} \label{E:Eq-Kc}
K=\frac{f''}{f};
\end{equation*}
\begin{equation} \label{E:Eq-Hc}
H = \frac{\kappa}{2f} \,n_1 + \frac{f f''+(f')^2 - 1}{2f \sqrt{1-f'^2}} \,n_2.
\end{equation}

Note that on the rotational hypersurface $\mathcal{M}^I$  we can consider two types of Lorentz meridian surfaces, namely surfaces of type $\mathcal{M}'_a$ and $\mathcal{M}'_b$,  while on $\mathcal{M}^{II}$ we can construct only one type of Lorentz meridian surfaces, namely $\mathcal{M}''$.

\subsection{Lorentz meridian surfaces  lying on a rotational hypersurface with spacelike axis}

In this subsection we shall explain the construction of  meridian surfaces lying on a
 rotational hypersurface with spacelike axis $Oe_1$. Similarly, we can consider meridian surfaces lying on a rotational hypersurface with axis $Oe_2$.

Let $\E^3_2$ be the Minkowski space $\E^3_2 = \span\left \{ e_{2},e_{3},e_{4}\right \}$. In $\E^3_2$ we can consider two types of spheres, namely the  de Sitter space $\mathbb{S}^2_2(1) =\left\{V\in \E^3_2: \langle V, V \rangle = 1\right \}$,  and the hyperbolic space $\mathbb{H}^2_1(-1)=\left\{V \in  \E^3_2: \langle V, V \rangle = - 1\right \}$.  So, we can consider two types of rotational hypersurfaces about the axis  $Oe_1$.

Let  $f=f(u)$, $g=g(u)$ be smooth functions, defined in an interval $I\subset \R$, and $\tilde{l}^I(w^1,w^2) = \cosh w^1 \,e_2 +   \sinh w^1 \cos w^2 \,e_3 + \sinh w^1 \sin w^2 \,e_4$ be the unit position vector of the sphere
$\mathbb{S}^2_2(1)$ in $\E^3_2 = \span\left \{ e_{2},e_{3},e_{4}\right \}$ centered at the origin $O$.
The first type rotational
hypersurface $\widetilde{\mathcal{M}}^I$, obtained by the rotation of the
meridian curve $m:u\rightarrow (f(u),g(u))$ about the axis $Oe_1$, is
parametrized as follows:
\begin{equation*}
\widetilde{\mathcal{M}}^I:Z(u,w^1,w^2) = g(u) \,e_1 + f(u)\left(\cosh w^1 \,e_2 +   \sinh w^1 \cos w^2 \,e_3 + \sinh w^1 \sin w^2 \,e_4\right),
\end{equation*}
or equivalently,
\begin{equation*}
\widetilde{\mathcal{M}}^I:Z(u,w^1,w^2) =  g(u) \,e_1 + f(u) \,\tilde{l}^I(w^1,w^2).
\end{equation*}

The second type rotational
hypersurface $\widetilde{\mathcal{M}}^{II}$, obtained by the rotation of the
meridian curve $m$ about $Oe_1$, is parametrized  as follows:
\begin{equation*}
\widetilde{\mathcal{M}}^{II}:Z(u,w^1,w^2) = g(u) \,e_1 + f(u)\left(\sinh w^1 \,e_2 + \cosh w^1 \cos w^2 \,e_3 + \cosh w^1 \sin w^2 \,e_4\right),
\end{equation*}
or equivalently,
\begin{equation*}
\widetilde{\mathcal{M}}^{II}:Z(u,w^1,w^2) =  g(u) \,e_1 + f(u) \,\tilde{l}^{II}(w^1,w^2),
\end{equation*}
where
$\tilde{l}^{II}(w^1,w^2) = \sinh w^1 \,e_2 + \cosh w^1 \cos w^2 \,e_3 + \cosh w^1 \sin w^2 \,e_4$ is the unit position vector of the hyperbolic sphere
$\mathbb{H}^2_1(-1)$ in $\E^3_2 = \span\left \{ e_{2},e_{3},e_{4}\right \}$ centered at the origin $O$.

Now, we shall consider Lorentz surfaces in $\E^4_2$ which are one-parameter systems of meridians of the rotational hypersurface $\widetilde{\mathcal{M}}^I$ or $\widetilde{\mathcal{M}}^{II}$.

\vskip 3mm
\textbf{Meridian surfaces on  $\widetilde{\mathcal{M}}^I$}:

Let  $c: l = l(v) =  \tilde{l}^I(w^1(v),w^2(v)), \, v \in J, \, J \subset \R$ be a smooth curve on  $\mathbb{S}^2_2(1)$.
We consider the two-dimensional
 surface $\widetilde{\mathcal{M}}'$ lying on $\widetilde{\mathcal{M}}^I$ and defined by:
\begin{equation}  \label{E:Eq-1-tilde}
\widetilde{\mathcal{M}}': z(u,v) = g(u) \,e_1 + f(u)\, l(v), \quad u \in I, \, v \in J.
\end{equation}
The surface $\widetilde{\mathcal{M}}'$ is a
one-parameter system of meridians of $\widetilde{\mathcal{M}}^I$. It can easily be seen that the surface 
 $\mathcal{M}''$, defined by \eqref{E:Eq-4}, can be transformed into the surface $\widetilde{\mathcal{M}}'$ by the transformation $T$ given by 
\begin{equation} \label{E:Eq-T}
T = \left(\begin{array}{cccc}
\vspace{2mm}
0 & 0& 0&1\\
\vspace{2mm}
0 & 0& 1&0\\
\vspace{2mm}
1 & 0& 0&0\\
\vspace{2mm}
 0& 1& 0&0\\
\end{array}\right).
\end{equation}
So, the meridian surfaces  $\mathcal{M}''$ and  $\widetilde{\mathcal{M}}'$ are congruent. Hence, all results concerning the surface  $\mathcal{M}''$  hold true for the surface $\widetilde{\mathcal{M}}'$.

\vskip 3mm
\textbf{Meridian surfaces on  $\widetilde{\mathcal{M}}^{II}$}:

Now we shall consider meridian surfaces lying on the second type rotational hypersurface  $\widetilde{\mathcal{M}}^{II}$.

Let $c: l = l(v) =  \tilde{l}^{II}(w^1(v),w^2(v)), \, v \in J, \, J \subset \R$ be a smooth curve on the hyperbolic sphere $\mathbb{H}^2_1(-1)$ in $\E^3_2$.  We consider the meridian surface $\widetilde{\mathcal{M}}''$ lying on $\widetilde{\mathcal{M}}^{II}$ and defined by:
\begin{equation}  \label{E:Eq-4-tilde}
\widetilde{\mathcal{M}}'': z(u,v) = g(u) \,e_1 + f(u)\, l(v), \quad u \in I, \, v \in J.
\end{equation}

The tangent space of $\widetilde{\mathcal{M}}''$  is spanned by the vector fields
$$z_u = g'(u) \,e_1 + f'(u) \,l(v); \qquad  z_v = f(u) \,l'(v),$$
so, the coefficients of the first fundamental form  are
$$E = g'^2  -  f'^2; \quad F = 0; \quad  G = f^2 \langle l',l'\rangle.$$
Now,  we consider the following  two cases:

\vskip 2mm
\textbf{Case (a)}: Let  $c$ be a spacelike curve, i.e. $\langle l',l'\rangle =1$.
In this case we suppose that  $f'^2 - g'^2 = 1$. Then the coefficients of the first fundamental form are $E = -1; \, F = 0; \, G = f^2$.
We shall denote the meridian surface in this case by $\widetilde{\mathcal{M}}''_a$.
Under the tranformation $T$ given by \eqref{E:Eq-T} the surface $\mathcal{M}'_b$ is transformed into the surface $\widetilde{\mathcal{M}}''_a$

\vskip 2mm
\textbf{Case (b)}: Let the curve $c$ be timelike, i.e.  $\langle l',l'\rangle = -1$. In this case we assume that $f'^2 - g'^2 = -1$.
Then for the coefficients of the first fundamental form we have $E = 1; \, F = 0$; $G = - f^2$.
We  denote the meridian surface in this case by $\widetilde{\mathcal{M}}''_b$. It is clear that the meridian surfaces $\mathcal{M}'_a$ and 
$\widetilde{\mathcal{M}}''_b$ are congruent (up to the transformation $T$).

\vskip 2mm
In the present paper we will study three types of Lorentz meridian surfaces in $\E^4_2$, namely the surfaces denoted by $\mathcal{M}'_a$, $\mathcal{M}'_b$, and $\mathcal{M}''$.

\section{Classification of meridian surfaces  with parallel mean curvature vector field}

In this section we shall describe all meridian surfaces defined in the previous section which have parallel mean curvature vector field.

\vskip 2mm
First we consider the meridian surface  $\mathcal{M}'_a$, defined by \eqref{E:Eq-1}, where  $f'^2 - g'^2 = -1$. Using formulas \eqref{E:Eq-2} and \eqref{E:Eq-2-2}, we get
\begin{equation} \label{E:Eq-5}
\begin{array}{ll}
\vspace{2mm}
\widetilde{\nabla}_X n_1 =0; & \qquad \widetilde{\nabla}_X n_2 = \kappa_m X;\\
\vspace{2mm}
\widetilde{\nabla}_Y n_1 = - \frac{\kappa}{f} \, Y; & \qquad \widetilde{\nabla}_Y n_2 = \frac{g'}{f} \, Y.
\end{array}
\end{equation}
The mean curvature vector field $H$ of the meridian surface $\mathcal{M}'_a$ is given by formula  \eqref{E:Eq-Ha}. Hence, by use of \eqref{E:Eq-5} we obtain
\begin{equation} \label{E:Eq-6}
\begin{array}{l}
\vspace{2mm}
\widetilde{\nabla}_X H = - \frac{f'' (f f''+(f')^2 + 1)}{2f (f'^2+1)} \, X +   \frac{\kappa f'}{2f^2} \,n_1 - \left(\frac{f f''+(f')^2 + 1}{2f g'} \right)' \,n_2;\\
\vspace{2mm}
\widetilde{\nabla}_Y H = \frac{\kappa^2 - (f f''+(f')^2 + 1)}{2f^2} \, Y -  \frac{\kappa'}{2f^2} \,n_1.
\end{array}
\end{equation}

\begin{thm} \label{T:parallel-H-a}
Let  $\mathcal{M}'_a$ be a meridian surface on $\mathcal{M}^I$ defined by \eqref{E:Eq-1}, (resp. $\widetilde{\mathcal{M}}''_b$ be a meridian surface on  $\widetilde{\mathcal{M}}^{II}$ defined by  \eqref{E:Eq-4-tilde}),
where  $f'^2 - g'^2 = -1$. Then
$\mathcal{M}'_a$ (resp.  $\widetilde{\mathcal{M}}''_b$ ) has parallel mean curvature vector field if and
only if one of the following cases holds:

\hskip 10mm (i) the curve $c$   has constant spherical curvature
and the  meridian $m$ is defined by
$f(u) = a$, $g(u) = \pm u + b$, where $a = const \neq 0$, $b=const$.
In this case  $\mathcal{M}'_a$ (resp.  $\widetilde{\mathcal{M}}''_b$ ) is a flat CMC-surface.

\hskip 10mm (ii) the curve $c$   has zero spherical curvature  and the meridian $m$ is determined by $f' = \varphi(f)$ where
\begin{equation} \notag
\varphi(t) = \pm \frac{1}{t} \sqrt{(c \pm a\, t^2)^2 -t^2}, \quad a =
const \neq 0, \quad c = const,
\end{equation}
$g(u)$ is defined by $g' = \sqrt{f'^2 + 1}$. In this case  $\mathcal{M}'_a$ (resp.  $\widetilde{\mathcal{M}}''_b$ ) lies in a  hyperplane of $\E^4_2$.
\end{thm}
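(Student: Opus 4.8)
The plan is to compute the normal connection $D$ applied to $H$ directly from the formulas in \eqref{E:Eq-6} and to translate the condition $DH=0$ into a system of ordinary differential equations for $f$, $g$ and the spherical curvature $\kappa$. Since the transformation $T$ of \eqref{E:Eq-T} carries $\mathcal{M}'_a$ isometrically onto $\widetilde{\mathcal{M}}''_b$, it suffices to treat $\mathcal{M}'_a$. By definition $DH$ is the normal component of $\widetilde{\nabla}H$; hence, reading off the $n_1$- and $n_2$-parts in \eqref{E:Eq-6}, the surface has parallel mean curvature vector field precisely when
\begin{equation} \notag
\frac{\kappa f'}{2f^2}=0,\qquad \left(\frac{f f''+(f')^2+1}{2f g'}\right)'=0,\qquad \frac{\kappa'}{2f^2}=0
\end{equation}
hold identically, where in the middle equation the prime is the derivative in $u$ and in the last one it is the derivative in $v$.

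First I would extract the immediate consequences. The last equation gives $\kappa'=0$, so the spherical curvature of $c$ is constant. Since $\kappa=\kappa(v)$ and $f'=f'(u)$ depend on independent variables, the first equation $\kappa f'\equiv 0$ forces the dichotomy $f'\equiv 0$ or $\kappa\equiv 0$, which produces exactly the two cases of the statement. In the branch $f'\equiv 0$ we get $f=a=const\neq 0$, and then $f'^2-g'^2=-1$ yields $g'^2=1$, i.e. $g(u)=\pm u+b$; the middle equation reduces to $\big(\tfrac{1}{2ag'}\big)'=0$, which is automatic. This is case (i); here $K=f''/f=0$ and the length of $H$ is constant by \eqref{E:Eq-Ha}, so the surface is a flat CMC-surface.

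The substantive branch is $\kappa\equiv 0$. Then the Frenet formulas \eqref{E:Eq-2} give $n'\equiv 0$, so $n=n_1$ is a fixed vector of $\E^3_1\subset\E^4_2$; since $\langle l(v),n\rangle=\langle e_4,n\rangle=0$ we obtain $\langle z(u,v),n\rangle\equiv 0$, i.e. $\mathcal{M}'_a$ lies in the hyperplane $n^\perp$, which is the last assertion of case (ii). It remains to integrate the middle equation. Writing $g'=\sqrt{f'^2+1}$ and denoting the constant by $\pm a$, I would use the identity
\begin{equation} \notag
\frac{d}{du}\Big(f\sqrt{f'^2+1}\Big)=\frac{f'\big(f f''+(f')^2+1\big)}{\sqrt{f'^2+1}}
\end{equation}
to rewrite the condition as $\frac{d}{du}\big(f\sqrt{f'^2+1}\big)=\pm a\,\frac{d}{du}(f^2)$, whence the first integral $f\sqrt{f'^2+1}=c\pm a f^2$ with $c=const$. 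Squaring and solving for $f'$ then gives $f'=\pm\frac1f\sqrt{(c\pm a f^2)^2-f^2}=\varphi(f)$, and $g$ is recovered from $g'=\sqrt{f'^2+1}$, exactly as stated; the requirement $a\neq 0$ corresponds to $H\neq 0$.

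The main obstacle I anticipate is this integration step in case (ii): the normalization $f'^2-g'^2=-1$ (so that $g'=\sqrt{f'^2+1}$) must be used to eliminate $g$, and the key observation is that the constancy condition is an exact derivative, namely $\frac{d}{du}\big(f\sqrt{f'^2+1}\mp a f^2\big)=0$. Recognizing this first integral is what turns a second-order equation into the explicit first-order relation $f'=\varphi(f)$; the remaining bookkeeping of signs, together with the identification of the two congruent families via $T$, is then routine.
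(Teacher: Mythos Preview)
Your proposal is correct and follows the same overall architecture as the paper: extract the three vanishing conditions from \eqref{E:Eq-6}, deduce $\kappa'=0$ and the dichotomy $f'\equiv 0$ or $\kappa\equiv 0$, and treat the two branches separately. Case~(i) and the hyperplane claim in case~(ii) are handled essentially as in the paper.

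The one genuine difference is the integration of the second-order equation $f f''+(f')^2+1=\pm 2a f\sqrt{f'^2+1}$ in case~(ii). The paper proceeds by the substitution $f'=\varphi(f)$, which reduces the equation to a first-order ODE in $\varphi(t)$, and then by the further substitution $z(t)=\sqrt{\varphi^2(t)+1}$, which linearizes it to $z'+\tfrac{1}{t}z=\pm 2a$; solving this gives $z=(c\pm a t^2)/t$ and hence the stated $\varphi$. You instead multiply the equation by $f'/\sqrt{f'^2+1}$ and recognize the result as the exact identity $\tfrac{d}{du}\big(f\sqrt{f'^2+1}\big)=\pm a\,\tfrac{d}{du}(f^2)$, obtaining the first integral $f\sqrt{f'^2+1}=c\pm a f^2$ in one step. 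Your route is shorter and avoids the auxiliary variable $z$; the paper's route is more systematic and does not require spotting the integrating factor. Both implicitly assume $f'$ does not vanish on an open set (already excluded by case~(i)), so neither has a gap the other avoids.
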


\begin{proof}

Let  $\mathcal{M}'_a$ be a surface with parallel mean curvature vector field. Using formulas
\eqref{E:Eq-6} we get the following conditions
\begin{equation} \label{E:Eq-7}
\begin{array}{l}
\vspace{2mm}
  \kappa' =0;\\
 \vspace{2mm}
\kappa f' =0;\\
\vspace{2mm}
\left(\frac{f f''+(f')^2 + 1}{2f g'} \right)' =0.
\end{array}
\end{equation}
The first equality of  \eqref{E:Eq-7} implies that the spherical curvature $\kappa$ of $c$ is constant.
Having in mind \eqref{E:Eq-7} we obtain that  there are  two possible  cases:

\vskip 1mm
Case (i): $f' = 0$, i.e. $f(u) = a$, $a = const \neq 0$. Using that $f'^2 - g'^2 = -1$, we get $g(u) = \pm u+b$, $b = const$.
In this case the mean curvature vector field is expressed as follows
$$H = -\frac{\kappa}{2a}\, n_1 \mp \frac{1}{2a} \, n_2.$$
The last equality implies that $\langle H, H \rangle = \frac{1-\kappa^2}{4a^2}=const$. Hence,  $\mathcal{M}'_a$  has constant mean curvature.
If $\kappa^2 =1$, then  $\mathcal{M}'_a$ is quasi-minimal. If $\kappa^2 \neq 1$, then  $\mathcal{M}'_a$ has non-zero constant mean curvature.
Having in mind that the  Gauss curvature of $\mathcal{M}'_a$ is expressed by formula \eqref{E:Eq-Ka}, in this case we obtain $K=0$, i.e. $\mathcal{M}'_a$ is flat.

\vskip 1mm
Case (ii): $\kappa = 0$ and $\frac{f f''+(f')^2 + 1}{2f g'}  = a = const$.
It follows from \eqref{E:Eq-5} that in the case  $\kappa =0$ we have  $\widetilde{\nabla}_X n_1 = \widetilde{\nabla}_Y n_1 = 0$,
and hence $\mathcal{M}'_a$ lies in the 3-dimensional constant hyperplane  $span \{X,Y,n_2\}$.
If $a = 0$, then $H=0$, i.e. $\mathcal{M}'_a$ is minimal. Since we consider non-minimal surfaces, we assume that $a \neq 0$.
In this case the meridian $m$ is determined by the following differential
equation:
\begin{equation} \label{E:Eq-8}
f f''+(f')^2 + 1= \pm  2a f \sqrt{f'^2 + 1}, \qquad a = const \neq 0.
\end{equation}
The solutions of the above  differential equation  can be found in the  following way.
Setting $f' = \varphi (f)$ in equation \eqref{E:Eq-8}, we obtain
that the function $\varphi  = \varphi (t)$ is a solution of the equation:
\begin{equation} \label{E:Eq-9}
\frac{t}{2} \,(\varphi ^2)' + \varphi ^2 + 1 = \pm 2 a t \sqrt{\varphi ^2 + 1}.
\end{equation}
If we  set $z(t) = \sqrt{\varphi ^2(t) +1}$, equation \eqref{E:Eq-9} takes the form
\begin{equation} \notag
z' + \frac{1}{t}\, z = \pm 2a.
\end{equation}
The general solution of the last equation is given by the formula $z(t) = \frac{c \pm at^2}{t}$, $ c = const$.
Hence, the general solution of \eqref{E:Eq-9} is
\begin{equation} \notag
\varphi(t) = \pm \frac{1}{t} \sqrt{(c \pm a\, t^2)^2 -t^2}.
\end{equation}

Conversely, if one of the cases (i) or (ii) stated in the theorem  holds true, then by direct computation we get that
$D_X H = D_Y H = 0$, i.e. the surface has parallel mean curvature vector field.

\end{proof}

\vskip 2mm
Next, we consider the meridian surface  $\mathcal{M}'_b$, defined by \eqref{E:Eq-1}, where  $f'^2 - g'^2 = 1$.
The mean curvature vector field  of $\mathcal{M}'_b$ is given by formula  \eqref{E:Eq-Hb}.
Similarly to the considerations about the meridian surface  $\mathcal{M}'_a$, now we obtain
\begin{equation} \label{E:Eq-6-b}
\begin{array}{l}
\vspace{2mm}
\widetilde{\nabla}_X H = \frac{f'' (f f''+(f')^2 - 1)}{2f (f'^2-1)} \, X +   \frac{\kappa f'}{2f^2} \,n_1 + \left(\frac{f f''+(f')^2 - 1}{2f g'} \right)' \,n_2;\\
\vspace{2mm}
\widetilde{\nabla}_Y H = \frac{f f''+(f')^2 - 1 - \kappa^2}{2f^2} \, Y -  \frac{\kappa'}{2f^2} \,n_1.
\end{array}
\end{equation}

In the following theorem we give the classification of the  meridian surfaces of type $\mathcal{M}'_b$ having parallel mean curvature vector field.

\begin{thm} \label{T:parallel-H-b}
Let  $\mathcal{M}'_b$ be a meridian surface on $\mathcal{M}^I$
defined by \eqref{E:Eq-1} (resp. $\widetilde{\mathcal{M}}''_a$ be a meridian surface on  $\widetilde{\mathcal{M}}^{II}$ defined by  \eqref{E:Eq-4-tilde}), where  $f'^2 - g'^2 = 1$. Then
$\mathcal{M}'_b$  (resp. $\widetilde{\mathcal{M}}''_a$)  has parallel mean curvature vector field if and
only if the curve $c$   has zero spherical curvature  and the meridian $m$ is determined by $f' = \varphi(f)$ where
\begin{equation} \notag
\varphi(t) = \pm \frac{1}{t} \sqrt{(c \pm a\, t^2)^2 +t^2}, \quad a =
const \neq 0, \quad c = const,
\end{equation}
$g(u)$ is defined by $g' = \sqrt{f'^2 - 1}$. Moreover,  $\mathcal{M}'_b$  (resp. $\widetilde{\mathcal{M}}''_a$) lies in a  hyperplane of $\E^4_2$.
\end{thm}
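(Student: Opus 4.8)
The plan is to follow the same strategy as in the proof of Theorem \ref{T:parallel-H-a}, starting from the covariant derivatives \eqref{E:Eq-6-b}. Since $\{n_1, n_2\}$ is a frame field of the normal bundle, the normal parts of $\widetilde{\nabla}_X H$ and $\widetilde{\nabla}_Y H$ are precisely $D_X H$ and $D_Y H$, so the condition $DH = 0$ for parallel mean curvature vector field is equivalent to the vanishing of the coefficients of $n_1$ and $n_2$ in \eqref{E:Eq-6-b}. First I would read off from these coefficients the system
\begin{equation} \notag
\kappa' = 0, \qquad \kappa f' = 0, \qquad \left( \frac{f f'' + (f')^2 - 1}{2 f g'} \right)' = 0.
\end{equation}
The first equation says that the spherical curvature $\kappa$ of $c$ is constant.

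The decisive step, and the point where this theorem genuinely differs from Theorem \ref{T:parallel-H-a}, is the analysis of $\kappa f' = 0$. In the present case the meridian satisfies $f'^2 - g'^2 = 1$, so $f' = 0$ would force $g'^2 = -1$, which is impossible. Hence $f' \neq 0$ everywhere, and the only surviving branch is $\kappa = 0$. This is why, unlike in Theorem \ref{T:parallel-H-a}, there is no flat CMC case and the classification reduces to a single family. Next, setting $\kappa = 0$ in the Weingarten formulas for $\mathcal{M}'_b$ (the analog of \eqref{E:Eq-5}, obtained from \eqref{E:Eq-2-b} together with the expressions for $n_1, n_2$) gives $\widetilde{\nabla}_X n_1 = \widetilde{\nabla}_Y n_1 = 0$, so $n_1$ is a constant vector and $\mathcal{M}'_b$ lies in the hyperplane $\span\{X, Y, n_2\}$ of $\E^4_2$.

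It remains to solve the third equation. Writing the constant as $a$ and discarding the value $a = 0$ (which yields the minimal case), and using $g' = \sqrt{f'^2 - 1}$, the meridian is governed by $f f'' + (f')^2 - 1 = \pm 2 a f \sqrt{f'^2 - 1}$. I would integrate this exactly as before: substituting $f' = \varphi(f)$ turns it into $\frac{t}{2}(\varphi^2)' + \varphi^2 - 1 = \pm 2 a t \sqrt{\varphi^2 - 1}$, and then the substitution $z(t) = \sqrt{\varphi^2(t) - 1}$, the natural counterpart of $\sqrt{\varphi^2 + 1}$ from Theorem \ref{T:parallel-H-a} dictated by the sign change in $g'$, linearizes it to $z' + \frac{1}{t} z = \pm 2a$. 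Its general solution $z(t) = \frac{c \pm a t^2}{t}$ then yields $\varphi(t) = \pm \frac{1}{t}\sqrt{(c \pm a t^2)^2 + t^2}$, which is the stated formula; the $+t^2$ under the radical (versus $-t^2$ before) is exactly the imprint of working with $\varphi^2 - 1$ instead of $\varphi^2 + 1$. For the converse I would substitute this $\varphi$ back and verify by direct computation that $D_X H = D_Y H = 0$.

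I expect the only real obstacle to be bookkeeping rather than any new idea: confirming cleanly that $f'^2 - g'^2 = 1$ excludes $f' = 0$ and thereby collapses the two-case structure of Theorem \ref{T:parallel-H-a} into one, and then tracking the signs carefully through both substitutions so that the final radicand emerges as $(c \pm a t^2)^2 + t^2$.
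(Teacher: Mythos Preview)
Your proposal is correct and follows essentially the same approach as the paper's proof: both read off the three conditions from \eqref{E:Eq-6-b}, use $f'^2 - g'^2 = 1$ to rule out $f' = 0$ and hence force $\kappa = 0$, observe that $n_1$ is then constant so the surface lies in a hyperplane, and integrate the resulting ODE via the substitutions $f' = \varphi(f)$ and $z = \sqrt{\varphi^2 - 1}$ to obtain the stated $\varphi$. Your write-up is in fact slightly more explicit than the paper's about the linearizing substitution $z = \sqrt{\varphi^2 - 1}$, which the paper leaves implicit by analogy with Theorem~\ref{T:parallel-H-a}.
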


\begin{proof}

Let  $\mathcal{M}'_b$ be a surface with parallel mean curvature vector field. Formulas
\eqref{E:Eq-6-b} imply the following conditions
\begin{equation*} \label{E:Eq-7-b}
\begin{array}{l}
\vspace{2mm}
  \kappa' =0;\\
 \vspace{2mm}
\kappa f' =0;\\
\vspace{2mm}
\left(\frac{f f''+(f')^2 - 1}{2f g'} \right)' =0,
\end{array}
\end{equation*}
and hence, we get  $\kappa = const$.
If we assume that  $f' = 0$, i.e. $f(u) = a = const$, then having in mind that $f'^2 - g'^2 = 1$, we get $g'^2 = -1$, which is not possible.
So, the only possible case is  $\kappa = 0$ and $\frac{f f''+(f')^2 - 1}{2f g'}  = a = const$.
Since  $\kappa =0$, we have  $\widetilde{\nabla}_X n_1 = \widetilde{\nabla}_Y n_1 = 0$.
So, $\mathcal{M}'_b$ lies in the 3-dimensional constant hyperplane  $span \{X,Y,n_2\}$ of $\E^4_2$.
We consider non-minimal surfaces, so we assume that $a \neq 0$.
The meridian $m$ is determined by the following differential
equation:
\begin{equation} \label{E:Eq-8-b}
f f''+(f')^2 - 1= \pm  2a f \sqrt{f'^2 - 1}, \qquad a = const \neq 0.
\end{equation}
Similarly to the proof of Theorem \ref{T:parallel-H-a}, setting $f' = \varphi (f)$ in equation \eqref{E:Eq-8-b},  we obtain
\begin{equation} \label{E:Eq-10}
\varphi(t) = \pm \frac{1}{t} \sqrt{(c \pm a\, t^2)^2 +t^2}, \quad a = const \neq 0, \quad c = const.
\end{equation}

Conversely, if $\kappa = 0$ and the meridian $m$ is determined by \eqref{E:Eq-10}, then direct computation show that
$D_X H = D_Y H = 0$, i.e. $\mathcal{M}'_b$  has parallel mean curvature vector field.

\end{proof}

\vskip 2mm
Now, let us consider the meridian surface  $\mathcal{M}''$, defined by  \eqref{E:Eq-4}, where $f'^2 + g'^2 =1$.
The mean curvature vector field  of  $\mathcal{M}''$ is given by formula  \eqref{E:Eq-Hc}.
The derivatives of $H$ with respect to $X$ and $Y$ are given by the following formulas
\begin{equation*} \label{E:Eq-6-c}
\begin{array}{l}
\vspace{2mm}
\widetilde{\nabla}_X H = \frac{f'' (f f''+(f')^2 - 1)}{2f (1-f'^2)} \, X - \frac{\kappa f'}{2f^2} \,n_1 + \left(\frac{f f''+(f')^2 - 1}{2f g'} \right)' \,n_2;\\
\vspace{2mm}
\widetilde{\nabla}_Y H = -\frac{f f''+(f')^2 - 1 - \kappa^2}{2f^2} \, Y + \frac{\kappa'}{2f^2} \,n_1.
\end{array}
\end{equation*}

Similarly to the proof of  Theorem \ref{T:parallel-H-a}, we obtain the following classification result.

\begin{thm} \label{T:parallel-H-c}
Let  $\mathcal{M}''$ be a meridian surface on $\mathcal{M}^{II}$
defined by \eqref{E:Eq-4} (resp. $\widetilde{\mathcal{M}}'$ be a meridian surface on $\widetilde{\mathcal{M}}^{I}$,
defined by \eqref{E:Eq-1-tilde}), where  $f'^2 + g'^2 = 1$. Then
$\mathcal{M}''$ (resp. $\widetilde{\mathcal{M}}'$) has parallel mean curvature vector field if and
only if one of the following cases holds:

\hskip 10mm (i) the curve $c$   has constant spherical curvature
and the  meridian $m$ is defined by
$f(u) = a$, $g(u) = \pm u + b$, where $a = const \neq 0$, $b=const$.
In this case  $\mathcal{M}''$ (resp. $\widetilde{\mathcal{M}}'$) is a flat CMC-surface.

\hskip 10mm (ii) the curve $c$   has zero spherical curvature  and the meridian $m$ is determined by $f' = \varphi(f)$ where
\begin{equation} \notag
\varphi(t) = \pm \frac{1}{t} \sqrt{t^2 -(c \pm a\, t^2)^2 }, \quad a =
const \neq 0, \quad c = const,
\end{equation}
$g(u)$ is defined by $g' = \sqrt{1-f'^2}$. In this case  $\mathcal{M}''$ (resp. $\widetilde{\mathcal{M}}'$) lies in a  hyperplane of $\E^4_2$.
\end{thm}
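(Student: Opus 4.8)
The plan is to mirror the proof of Theorem~\ref{T:parallel-H-a}, since the expressions for $\widetilde{\nabla}_X H$ and $\widetilde{\nabla}_Y H$ for $\mathcal{M}''$ displayed just before the statement have exactly the same shape. First I would observe that $DH=0$ is equivalent to the vanishing of the normal, i.e. the $n_1$- and $n_2$-, components of both covariant derivatives. Reading these off yields the system
\begin{equation*}
\kappa'=0; \qquad \kappa f'=0; \qquad \left(\frac{f f''+(f')^2-1}{2f g'}\right)'=0,
\end{equation*}
which is the precise analog of \eqref{E:Eq-7}. The first equation gives $\kappa=const$, and the second splits the argument into two cases.

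If $f'=0$, then $f(u)=a=const\neq 0$, and the constraint $f'^2+g'^2=1$ forces $g'^2=1$, hence $g(u)=\pm u+b$. Substituting into \eqref{E:Eq-Hc} gives $H=\frac{\kappa}{2a}\,n_1-\frac{1}{2a}\,n_2$, so, using $\langle n_1,n_1\rangle=1$ and $\langle n_2,n_2\rangle=-1$, one obtains $\langle H,H\rangle=\frac{\kappa^2-1}{4a^2}=const$, while the Gauss-curvature formula $K=f''/f$ gives $K=0$; this is case~(i). If instead $\kappa=0$, I would first record the Weingarten-type identities for $\mathcal{M}''$: since $n_1=n(v)$ depends only on $v$ one has $\widetilde{\nabla}_X n_1=0$, while \eqref{E:Eq-2-c} gives $\widetilde{\nabla}_Y n_1=-\frac{\kappa}{f}\,Y$. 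Thus $\kappa=0$ makes $n_1$ a constant vector of $\E^4_2$; since $l$ and $n$ lie in $\span\{e_1,e_2,e_3\}$ and are orthogonal while $e_4\perp\span\{e_1,e_2,e_3\}$, we have $\langle z,n_1\rangle=f\langle l,n\rangle=0$ identically, so $\mathcal{M}''$ lies in the fixed hyperplane $\span\{X,Y,n_2\}=n_1^{\perp}$.

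In case~(ii) the surviving relation $\frac{f f''+(f')^2-1}{2f g'}=a=const$ together with $g'=\sqrt{1-f'^2}$ becomes the meridian equation
\begin{equation*}
f f''+(f')^2-1=\pm 2a f\sqrt{1-f'^2}, \qquad a=const\neq 0,
\end{equation*}
the subcase $a=0$ being discarded as minimal. Setting $f'=\varphi(f)$ reduces this to a first-order equation in $\varphi$; the substitution $z(t)=\sqrt{1-\varphi^2(t)}$ — the analog of the one in Theorem~\ref{T:parallel-H-a}, adapted to the factor $\sqrt{1-f'^2}$ — linearizes it to an equation of the form $z'+\frac{1}{t}\,z=\pm 2a$. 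The integrating factor $t$ gives $z=\frac{c\pm a t^2}{t}$, and $\varphi^2=1-z^2$ then yields $\varphi(t)=\pm\frac{1}{t}\sqrt{t^2-(c\pm a t^2)^2}$, as claimed. The converse is checked by direct substitution, verifying that the $n_1$- and $n_2$-components of $\widetilde{\nabla}_X H$ and $\widetilde{\nabla}_Y H$ vanish; the statement for $\widetilde{\mathcal{M}}'$ then follows from the congruence via the transformation $T$ in \eqref{E:Eq-T}.

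The computation is essentially routine once the parallel with Theorem~\ref{T:parallel-H-a} is set up, so the only real point requiring care is the sign and radicand bookkeeping in the substitution $z=\sqrt{1-\varphi^2}$: because here $(\varphi^2)'=-2zz'$ rather than $+2zz'$ as in the $\sqrt{f'^2+1}$ case, the radicand flips from $(c\pm a t^2)^2-t^2$ to $t^2-(c\pm a t^2)^2$, and one must confirm this is the nonnegative quantity defining the admissible meridians, equivalently $|z|\le 1$, i.e. $f'^2\le 1$, consistent with $g'=\sqrt{1-f'^2}$.
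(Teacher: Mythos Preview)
Your proposal is correct and follows exactly the approach the paper intends: the paper itself offers no separate proof for Theorem~\ref{T:parallel-H-c}, stating only that it is obtained ``similarly to the proof of Theorem~\ref{T:parallel-H-a}'', and you have carried out precisely that parallel argument, including the correct adaptation of the substitution to $z=\sqrt{1-\varphi^2}$ and the resulting flip of the radicand. Your extra remarks --- the explicit verification that $n_1$ becomes a constant vector and that $\langle z,n_1\rangle=0$, and the observation that the $\widetilde{\mathcal{M}}'$ case follows via the congruence $T$ --- go slightly beyond what the paper spells out but are consistent with it.
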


\section{Classification of meridian surfaces  with parallel normalized  mean curvature vector field}

In this section we give the classification of all meridian surfaces  which have parallel normalized  mean curvature vector field but not parallel $H$.

\vskip 2mm
First we consider the meridian surface  $\mathcal{M}'_a$, defined by \eqref{E:Eq-1}, where  $f'^2 - g'^2 = -1$. The mean curvature vector field $H$ of $\mathcal{M}'_a$ is given by formula  \eqref{E:Eq-Ha}. We assume that $\langle H,H \rangle \neq 0$, i.e. $(f f''+(f')^2 + 1)^2 - \kappa^2 (f'^2+1) \neq 0$, and denote $\varepsilon = sign \langle H,H \rangle$. Then the  normalized mean curvature vector field of $\mathcal{M}'_a$ is given by 
\begin{equation} \label{E:Eq-H0-a}
H_0 =  \frac{1}{\sqrt{\varepsilon\left((f f''+(f')^2 + 1)^2 - \kappa^2 (f'^2+1)\right)}} \left(-\kappa \sqrt{f'^2+1} \,n_1 - (f f''+(f')^2 + 1) \,n_2\right).
\end{equation}

If $\kappa = 0$, then $H_0 = n_2$ and \eqref{E:Eq-5} implies that $D_X H_0 = D_Y H_0 =0$, i.e. $H_0$ is parallel in the normal bundle. We  consider this case as trivial, since under the assumption $\kappa =0$ the surface $\mathcal{M}'_a$ lies in a 3-dimensional space $\E^3_1$ and every surface in $\E^3_1$ has parallel normalized mean curvature vector field. So, further we assume that $\kappa \neq 0$.

For simplicity we denote $$A = \frac{-\kappa \sqrt{f'^2+1}}{\sqrt{\varepsilon\left((f f''+(f')^2 + 1)^2 - \kappa^2 (f'^2+1)\right)}},  
\quad B = \frac{-(f f''+(f')^2 + 1)}{\sqrt{\varepsilon\left((f f''+(f')^2 + 1)^2 - \kappa^2 (f'^2+1)\right)}},$$
so, the normalized mean  curvature vector field is  expressed as $H_0 = A\, n_1 + B \, n_2$. Then equalities \eqref{E:Eq-H0-a} and \eqref{E:Eq-5} imply 
\begin{equation} \label{E:Eq-11}
\begin{array}{l}
\vspace{2mm}
\widetilde{\nabla}_X H_0 = B \kappa_m \, X +  A'_u\,n_1 + B'_u \,n_2;\\
\vspace{2mm}
\widetilde{\nabla}_Y H_0 = \frac{B g' -A \kappa}{f} \, Y +  \frac{A'_v}{f} \,n_1 +  \frac{B'_v}{f} \,n_2,
\end{array}
\end{equation}
where $A'_u$ (resp. $A'_v$) denotes $\frac{\partial A}{\partial u}$ (resp. $\frac{\partial A}{\partial v}$).

\begin{thm} \label{T:parallel-H0-a}
Let  $\mathcal{M}'_a$ be a meridian surface on $\mathcal{M}^I$ defined by \eqref{E:Eq-1}, (resp. $\widetilde{\mathcal{M}}''_b$ be a meridian surface on  $\widetilde{\mathcal{M}}^{II}$ defined by  \eqref{E:Eq-4-tilde}),
where  $f'^2 - g'^2 = -1$. Then
$\mathcal{M}'_a$ (resp.  $\widetilde{\mathcal{M}}''_b$ ) 
has parallel normalized mean curvature vector field but not parallel mean curvature vector if and
only if one of the following cases holds:

\hskip 10mm (i) $\kappa \neq 0$ and the  meridian $m$ is defined by
$$f(u) = \pm \sqrt{-u^2 +2au+b}, \quad g(u) = \pm \sqrt{a^2+b} \arcsin \frac{u-a}{\sqrt{a^2+b}} + c,$$ 
where $a = const$, $b=const$, $c = const$.

\hskip 10mm (ii) the curve $c$   has non-zero constant spherical curvature  and the meridian $m$ is determined by $f' = \varphi(f)$ where
\begin{equation} \notag
\varphi(t) = \pm \frac{1}{t} \sqrt{(c t+a)^2 -t^2}, \quad a =
const, \; c = const \neq 0,\; c^2 \neq \kappa^2,
\end{equation}
$g(u)$ is defined by $g' = \sqrt{f'^2 + 1}$. 
\end{thm}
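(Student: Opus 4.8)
The plan is to reduce the condition to the constancy of the coefficients of $H_0$ and then integrate the resulting ODEs. Writing $H_0 = A\,n_1 + B\,n_2$ as in \eqref{E:Eq-H0-a}, formulas \eqref{E:Eq-11} show that the normal parts of the covariant derivatives are $D_X H_0 = A'_u\,n_1 + B'_u\,n_2$ and $D_Y H_0 = \frac1f\left(A'_v\,n_1 + B'_v\,n_2\right)$. Hence $\mathcal{M}'_a$ has parallel normalized mean curvature vector field exactly when $A$ and $B$ are locally constant, i.e. $A'_u = A'_v = B'_u = B'_v = 0$. To run the differentiations cleanly I would abbreviate $P = ff'' + (f')^2 + 1$ and $Q = \sqrt{f'^2 + 1}$ (functions of $u$ alone, with $g' = Q$) and set $D = \varepsilon\left(P^2 - \kappa^2 Q^2\right)$, so that $A = -\kappa Q\,D^{-1/2}$ and $B = -P\,D^{-1/2}$; recall $\kappa \neq 0$ by the standing assumption.

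First I would differentiate in $v$. Since only $\kappa = \kappa(v)$ depends on $v$, using $D'_v = -2\varepsilon\kappa\kappa' Q^2$ together with the identity $D + \varepsilon\kappa^2 Q^2 = \varepsilon P^2$ one obtains the factored forms
\[
A'_v = -\varepsilon Q P^2 \kappa'\,D^{-3/2}, \qquad B'_v = -\varepsilon P Q^2 \kappa\kappa'\,D^{-3/2}.
\]
As $Q \geq 1 > 0$ and $\kappa \neq 0$, the vanishing of $A'_v$ and $B'_v$ yields the dichotomy $P \equiv 0$ or $\kappa' \equiv 0$.

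If $P \equiv 0$, then $B \equiv 0$ and $D = -\varepsilon\kappa^2 Q^2 > 0$ forces $\varepsilon = -1$ and $A = \pm 1$ constant, so $H_0 = \pm n_1$ is automatically parallel for any non-vanishing $\kappa$; this is case (i). Solving $P = 0$ amounts to $(f^2)'' = -2$, giving $f^2 = -u^2 + 2au + b$ and, after integrating $g' = \sqrt{f'^2+1} = \sqrt{a^2+b}/f$, the stated $g$. If instead $P \not\equiv 0$, the dichotomy forces $\kappa = const$; differentiating in $u$ with $\kappa$ constant (using $D'_u = 2\varepsilon(PP'_u - \kappa^2 QQ'_u)$) the same cancellation produces
\[
A'_u = -\varepsilon\kappa\,P\,(PQ'_u - QP'_u)\,D^{-3/2}, \qquad B'_u = -\varepsilon\kappa^2\,Q\,(PQ'_u - QP'_u)\,D^{-3/2},
\]
so with $\kappa \neq 0$ and $Q > 0$ the vanishing of $A'_u, B'_u$ is equivalent to $(Q/P)' = 0$, i.e. $P = cQ$ with $c = P/Q = const$. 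Writing $f' = \varphi(f)$ and substituting $z = \sqrt{\varphi^2+1}$ reduces $ff'' + (f')^2 + 1 = c\sqrt{f'^2+1}$ to the linear equation $(tz)' = c$, whence $z = c + a/t$ and $\varphi(t) = \pm\frac1t\sqrt{(ct+a)^2 - t^2}$, which is case (ii). The constant $c \neq 0$ is exactly what distinguishes this branch from case (i), and the hypothesis $\langle H, H\rangle \neq 0$, which reads $P^2 \neq \kappa^2 Q^2$, becomes precisely $c^2 \neq \kappa^2$.

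It remains to verify that $H$ is not parallel and to run the converse. In case (i) one has $H = -\frac{\kappa}{2f}\,n_1$, and since $\widetilde{\nabla}_X n_1 = 0$ by \eqref{E:Eq-5} a direct computation gives $D_X H = \frac{\kappa f'}{2f^2}\,n_1 \neq 0$ (as $\kappa \neq 0$ and $f' = (a-u)/f \not\equiv 0$); in case (ii), Theorem \ref{T:parallel-H-a} shows that $DH = 0$ would force $\kappa f' = 0$, impossible since $\kappa \neq 0$ and $f' = \varphi(f) \neq 0$. The converse is immediate: in each case $A$ and $B$ are constant by construction, so \eqref{E:Eq-11} gives $D_X H_0 = D_Y H_0 = 0$. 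The main obstacle is purely computational, namely establishing the two factorizations above, but it is manageable since the $\kappa^2 Q^2$ terms cancel through $D + \varepsilon\kappa^2 Q^2 = \varepsilon P^2$; the ODE integration is the same substitution already used in Theorem \ref{T:parallel-H-a}, and the only genuine care needed is with the signs of $\varepsilon$, $\kappa$, and the square roots.
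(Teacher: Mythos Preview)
Your proof is correct and follows essentially the same route as the paper's: reduce parallelism of $H_0$ to constancy of $A$ and $B$, split on whether $P = ff'' + (f')^2 + 1$ vanishes, solve the resulting ODE via the substitution $z = \sqrt{\varphi^2+1}$, and check that $H$ is not parallel. The only tactical difference is that where you explicitly compute and factor the four partial derivatives $A'_u, A'_v, B'_u, B'_v$, the paper takes the shortcut of forming the ratio $B/A = \beta/\alpha$ and invoking separation of variables (the left side $\frac{\beta}{\alpha}\kappa$ of \eqref{E:Eq-13} depends only on $v$, the right side $P/Q$ only on $u$, so both equal a common constant $c$), which reaches the same conclusions with less computation.
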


\begin{proof}
Let $\mathcal{M}'_a$ be a surface with parallel normalized mean curvature vector field, i.e. $D_X H_0 = D_Y H_0 =0$. 
Then from  \eqref{E:Eq-11} it follows that  $A=const$, $B=const$. Hence, 
\begin{equation} \label{E:Eq-12}
\begin{array}{l}
\vspace{2mm}
\frac{-\kappa \sqrt{f'^2+1}}{\sqrt{\varepsilon\left((f f''+(f')^2 + 1)^2 - \kappa^2 (f'^2+1)\right)}} = \alpha=const;\\
\frac{-(f f''+(f')^2 + 1)}{\sqrt{\varepsilon\left((f f''+(f')^2 + 1)^2 - \kappa^2 (f'^2+1)\right)}} = \beta=const.
\end{array}
\end{equation}

\vskip 1mm
Case (i):  $f f''+(f')^2 + 1=0$. In this case, from \eqref{E:Eq-H0-a} we get that the normalized mean curvature vector field is $H_0 = n_1$ and the mean curvature vector field is $H = - \frac{\kappa}{2f} \,n_1$. Since we study surfaces with $\langle H,H \rangle \neq 0$, we get $\kappa \neq 0$. The solution of the differential equation $f f''+(f')^2 + 1=0$ is given by the formula 
$f(u) = \pm \sqrt{-u^2 +2au+b}$, where $a=const$, $b =const$. Using that $g' = \sqrt{f'^2 + 1}$, we obtain the following equation for $g(u)$:
$$g' = \pm \frac{\sqrt{a^2+b}}{\sqrt{-u^2 +2au+b}}.$$
Integrating the above equation we get
$$g(u) = \pm \sqrt{a^2+b} \arcsin \frac{u-a}{\sqrt{a^2+b}} + c, \quad c = const.$$

\vskip 1mm
Case (ii):  $f f''+(f')^2 + 1 \neq 0$. From \eqref{E:Eq-12} we get
\begin{equation} \label{E:Eq-13}
\frac{\beta}{\alpha} \,\kappa =  \frac{f f''+(f')^2 + 1}{\sqrt{f'^2+1}},  \quad \alpha \neq 0, \; \beta \neq 0.
\end{equation}
Since the left-hand side of equality \eqref{E:Eq-13} is a function of $v$, the right-hand side of \eqref{E:Eq-13} is a function of $u$, we obtain that 
\begin{equation} \notag
\begin{array}{l}
\vspace{2mm}
\frac{f f''+(f')^2 + 1}{\sqrt{f'^2+1}} = c, \quad c = const \neq 0;\\
\kappa = \frac{\alpha}{\beta}\, c = const.
\end{array}
\end{equation}
Then the length of the mean curvature vector field is $\langle H,H \rangle  = \frac{c^2 -\kappa^2}{4f^2}$. 
Since we study surfaces with $\langle H,H \rangle \neq 0$, we get $c^2 \neq \kappa^2$.
The meridian $m$ is determined by the following differential
equation:
\begin{equation} \label{E:Eq-14}
f f''+(f')^2 + 1= c \sqrt{f'^2 + 1}.
\end{equation}
The solutions of this  differential equation  can be found as follows.
We set $f' = \varphi (f)$ in equation \eqref{E:Eq-14} and obtain
that the function $\varphi  = \varphi (t)$ satisfies
\begin{equation} \label{E:Eq-15}
\frac{t}{2} \,(\varphi ^2)' + \varphi ^2 + 1 = c \sqrt{\varphi ^2 + 1}.
\end{equation}
Putting $z(t) = \sqrt{\varphi ^2(t) +1}$, equation \eqref{E:Eq-15} can be  written as
\begin{equation} \notag
z' + \frac{1}{t}\, z = \frac{c}{t},
\end{equation}
whose general solution  is  $z(t) = \frac{ct + a}{t}$, $a = const$.
Hence, the general solution of \eqref{E:Eq-15} is given by the formula
\begin{equation} \notag
\varphi(t) = \pm \frac{1}{t} \sqrt{(c t+a)^2 -t^2}.
\end{equation}

Conversely, if one of the cases (i) or (ii) stated in the theorem  holds true, then by direct computation we get that
$D_X H_0 = D_Y H_0 = 0$, i.e. the surface has parallel normalized  mean curvature vector field.
Moreover, in case (i) we have
$$D_XH = \frac{\kappa f'}{2f^2} \,n_1; \qquad D_YH = -  \frac{\kappa'}{2f^2} \,n_1,$$
which implies that $H$ is not parallel in the normal bundle, since $\kappa \neq 0$, $f' \neq 0$. 
In case (ii) we get
$$D_XH = \frac{\kappa f'}{2f^2} \,n_1 + \frac{cf'}{2f^2} \,n_2; \qquad D_YH = 0,$$
and again we have that $H$ is not parallel in the normal bundle. 
\end{proof}

\vskip 2mm
In a similar way we consider the meridian surface  $\mathcal{M}'_b$, defined by \eqref{E:Eq-1}, where  $f'^2 - g'^2 = 1$. 
The  normalized mean curvature vector field of $\mathcal{M}'_b$ is given by 
\begin{equation*} \label{E:Eq-H0-b}
H_0 =  \frac{1}{\sqrt{\varepsilon\left( \kappa^2 (f'^2-1) - (f f''+(f')^2 + 1)^2\right)}} \left(-\kappa \sqrt{f'^2-1} \,n_1 + (f f''+(f')^2 - 1) \,n_2\right),
\end{equation*}
where  $\varepsilon = sign \langle H,H \rangle$ and we assume that $\kappa^2 (f'^2-1) - (f f''+(f')^2 - 1)^2 \neq 0$.

The classification of the meridian surfaces of type $\mathcal{M}'_b$ and $\widetilde{\mathcal{M}}''_a$ which have parallel normalized  mean curvature vector field but not parallel $H$ is given in the following theorem.

\begin{thm} \label{T:parallel-H0-b}
Let  $\mathcal{M}'_b$ be a meridian surface on $\mathcal{M}^I$
defined by \eqref{E:Eq-1} (resp. $\widetilde{\mathcal{M}}''_a$ be a meridian surface on  $\widetilde{\mathcal{M}}^{II}$ defined by  \eqref{E:Eq-4-tilde}), where  $f'^2 - g'^2 = 1$. Then
$\mathcal{M}'_b$  (resp. $\widetilde{\mathcal{M}}''_a$)  has parallel normalized mean curvature vector field but not parallel mean curvature vector if and
only if one of the following cases holds:

\hskip 10mm (i) $\kappa \neq 0$ and the  meridian $m$ is defined by
$$f(u) = \pm \sqrt{u^2 +2au+b}, \quad g(u) = \pm \sqrt{a^2-b} \ln |u+a + \sqrt{u^2 +2au+b}| + c,$$ 
where $a = const$, $b=const$, $c = const$, $a^2-b>0$.

\hskip 10mm (ii) the curve $c$   has non-zero constant spherical curvature  and the meridian $m$ is determined by $f' = \varphi(f)$ where
\begin{equation} \notag
\varphi(t) = \pm \frac{1}{t} \sqrt{(c t+a)^2 + t^2}, \quad a =
const, \; c = const \neq 0,\; c^2 \neq \kappa^2,
\end{equation}
$g(u)$ is defined by $g' = \sqrt{f'^2 - 1}$. 
\end{thm}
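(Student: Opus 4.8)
The plan is to follow the scheme already established for the surface $\mathcal{M}'_a$ in Theorem~\ref{T:parallel-H0-a}. First I would record the Weingarten-type formulas for $\mathcal{M}'_b$ obtained from the Frenet equations \eqref{E:Eq-2-b}: one checks that $\widetilde{\nabla}_X n_1 = 0$, that $\widetilde{\nabla}_X n_2$ is tangential, and that $\widetilde{\nabla}_Y n_1$ and $\widetilde{\nabla}_Y n_2$ are tangential (proportional to $Y$). The crucial consequence is that $\{n_1, n_2\}$ is a \emph{parallel} orthonormal frame of the normal bundle, i.e. $D n_1 = D n_2 = 0$. Writing the normalized mean curvature vector field as $H_0 = A\,n_1 + B\,n_2$, it then follows that $D_X H_0 = A'_u\,n_1 + B'_u\,n_2$ and $D_Y H_0 = \frac{1}{f}\left(A'_v\,n_1 + B'_v\,n_2\right)$, exactly as in \eqref{E:Eq-11}. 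Hence the condition $D H_0 = 0$ is equivalent to $A$ and $B$ being constant in both $u$ and $v$. As for $\mathcal{M}'_a$, the subcase $\kappa = 0$ is discarded as trivial (the surface then lies in a three-dimensional subspace, where every surface automatically has parallel normalized mean curvature vector field), so I assume $\kappa \neq 0$ throughout.

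Setting $A \equiv \alpha$ and $B \equiv \beta$ constant, I would split according to whether the numerator $f f'' + (f')^2 - 1$ of $B$ vanishes. In Case (i) $f f'' + (f')^2 - 1 \equiv 0$, so $H_0$ is a unit multiple of $n_1$ and the mean curvature vector reduces to $H = -\frac{\kappa}{2f}\,n_1$; the requirement $\langle H, H\rangle \neq 0$ then forces $\kappa \neq 0$. The defining equation integrates at once: since $f f'' + (f')^2 = (f f')'$, one gets $f f' = u + a$ and thus $f^2 = u^2 + 2au + b$. In Case (ii) $f f'' + (f')^2 - 1 \neq 0$, so $\alpha, \beta \neq 0$; forming the ratio $B/A$ separates the variables, the left-hand side depending only on $v$ through $\kappa$ and the right-hand side only on $u$, which yields simultaneously $\kappa = \mathrm{const}$ and $\frac{f f'' + (f')^2 - 1}{\sqrt{f'^2 - 1}} = c = \mathrm{const}$. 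Computing $\langle H, H\rangle = \frac{\kappa^2 - c^2}{4 f^2}$ from \eqref{E:Eq-Hb} shows that $\langle H, H\rangle \neq 0$ is equivalent to $c^2 \neq \kappa^2$.

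The remaining work is solving the two ordinary differential equations. In Case (i) I substitute $f' = \frac{u+a}{\sqrt{u^2+2au+b}}$ into $g' = \sqrt{f'^2 - 1}$, obtaining $g' = \pm \frac{\sqrt{a^2-b}}{\sqrt{u^2+2au+b}}$ (which requires $a^2 - b > 0$), and integrate to the stated logarithmic expression. In Case (ii) I set $f' = \varphi(f)$, turning $f f'' + (f')^2 - 1 = c\sqrt{f'^2-1}$ into $\frac{t}{2}(\varphi^2)' + \varphi^2 - 1 = c\sqrt{\varphi^2 - 1}$; the substitution $z(t) = \sqrt{\varphi^2(t) - 1}$ linearizes this to $z' + \frac{1}{t}\,z = \frac{c}{t}$, whose general solution $z = \frac{ct+a}{t}$ gives $\varphi(t) = \pm\frac{1}{t}\sqrt{(ct+a)^2 + t^2}$, as claimed.

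Finally, for the converse I would substitute each solution back and verify by direct computation that $D_X H_0 = D_Y H_0 = 0$, while simultaneously exhibiting nonzero components of $D_X H$ and $D_Y H$ (arising from terms in $\kappa'$ and $\kappa f'$, which are nonzero precisely because $\kappa \neq 0$), so that $H$ is not parallel. I expect the main obstacle to be the variable-separation argument in Case (ii) together with the validity of the substitution $z = \sqrt{f'^2-1}$ — one must keep track of the sign of $f'^2 - 1 = g'^2 \geq 0$ and of the branches of the square roots — and, conceptually, ensuring that each solution genuinely fails to have parallel $H$, i.e. is not already covered by Theorem~\ref{T:parallel-H-b}; this is exactly what the final direct computation guarantees.
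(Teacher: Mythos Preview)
Your proposal is correct and follows essentially the same approach as the paper, which in fact omits the proof entirely with the remark ``The proof of this theorem is similar to the proof of Theorem~\ref{T:parallel-H0-a}.'' Your outline reproduces that analogous argument accurately, including the case split on whether $f f'' + (f')^2 - 1$ vanishes, the integration of the ODEs via $(ff')' = 1$ in Case~(i) and via the linearizing substitution $z = \sqrt{\varphi^2 - 1}$ in Case~(ii), and the final check that $D H \neq 0$; the only minor imprecision is the parenthetical about $\kappa'$ being nonzero, since in Case~(ii) one has $\kappa' = 0$ and the nonvanishing of $D_X H$ comes solely from the $\kappa f'$ and $c f'$ terms.
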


The proof of this theorem is similar to the proof of Theorem \ref{T:parallel-H0-a}.

\vskip 3mm
The classification of the meridian surfaces of type $\mathcal{M}''$ and $\widetilde{\mathcal{M}}'$ which have parallel normalized  mean curvature vector field but not parallel $H$ is given in the following theorem.

\begin{thm} \label{T:parallel-H0-c}
Let  $\mathcal{M}''$ be a meridian surface on $\mathcal{M}^{II}$
defined by \eqref{E:Eq-4} (resp. $\widetilde{\mathcal{M}}'$ be a meridian surface on $\widetilde{\mathcal{M}}^{I}$,
defined by \eqref{E:Eq-1-tilde}), where  $f'^2 + g'^2 = 1$. Then
$\mathcal{M}''$ (resp. $\widetilde{\mathcal{M}}'$) has parallel normalized mean curvature vector field but not parallel mean curvature vector if and
only if one of the following cases holds:

\hskip 10mm (i) $\kappa \neq 0$ and the  meridian $m$ is defined by
$$f(u) = \pm \sqrt{u^2 +2au+b}, \quad g(u) = \pm \sqrt{b -a^2} \ln |u+a + \sqrt{u^2 +2au+b}| + c,$$ 
where $a = const$, $b=const$, $c = const$, $b -a^2>0$.

\hskip 10mm (ii) the curve $c$   has non-zero constant spherical curvature  and the meridian $m$ is determined by $f' = \varphi(f)$ where
\begin{equation} \notag
\varphi(t) = \pm \frac{1}{t} \sqrt{t^2 -(a - c t)^2}, \quad a =
const, \; c = const \neq 0,\; c^2 \neq \kappa^2,
\end{equation}
$g(u)$ is defined by $g' = \sqrt{1- f'^2}$. 
\end{thm}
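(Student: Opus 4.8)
The plan is to follow the proof of Theorem~\ref{T:parallel-H0-a} step by step, adjusting the signs to the neutral signature $\langle n_1,n_1\rangle=1$, $\langle n_2,n_2\rangle=-1$ of the normal bundle of $\mathcal{M}''$. As in the discussion preceding Theorem~\ref{T:parallel-H0-a}, if $\kappa=0$ the surface lies in a hyperplane and has parallel normalized mean curvature vector automatically, so I discard this trivial case and assume $\kappa\neq0$ throughout. First I would record the normal-connection relations for $\mathcal{M}''$: differentiating $n_1=n(v)$ and $n_2=-g'l+f'e_4$ along $X=z_u$ and $Y=t$ and invoking the Frenet formulas \eqref{E:Eq-2-c} together with the identity $f'f''+g'g''=0$ coming from $f'^2+g'^2=1$, I get $\widetilde{\nabla}_X n_1=0$, $\widetilde{\nabla}_X n_2=-\kappa_m X$, $\widetilde{\nabla}_Y n_1=-\frac{\kappa}{f}Y$, $\widetilde{\nabla}_Y n_2=-\frac{g'}{f}Y$; in particular $Dn_1=Dn_2=0$. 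Normalizing the mean curvature vector \eqref{E:Eq-Hc} as $H_0=A\,n_1+B\,n_2$, where $\varepsilon=sign\langle H,H\rangle$ and
$$A=\frac{\kappa\sqrt{1-f'^2}}{\sqrt{\varepsilon\left(\kappa^2(1-f'^2)-(ff''+(f')^2-1)^2\right)}},\qquad B=\frac{ff''+(f')^2-1}{\sqrt{\varepsilon\left(\kappa^2(1-f'^2)-(ff''+(f')^2-1)^2\right)}},$$
these relations reduce $D_XH_0$ and $D_YH_0$ to $A'_u\,n_1+B'_u\,n_2$ and $\frac{1}{f}(A'_v\,n_1+B'_v\,n_2)$, respectively. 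Thus $\mathcal{M}''$ has parallel normalized mean curvature vector if and only if $A$ and $B$ are constants.

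Next I would split according to whether the coefficient $ff''+(f')^2-1$ of $n_2$ vanishes. In the vanishing case $B=0$, so $H_0=\pm n_1$ is parallel, while $H=\frac{\kappa}{2f}n_1$ needs $\kappa\neq0$ to keep $\langle H,H\rangle\neq0$; this is case (i). Writing $ff''+(f')^2-1=0$ as $(ff')'=1$ gives $ff'=u+a$, hence $f=\pm\sqrt{u^2+2au+b}$, and then $1-f'^2=\frac{b-a^2}{u^2+2au+b}$, so that $g'=\sqrt{1-f'^2}=\frac{\sqrt{b-a^2}}{\sqrt{u^2+2au+b}}$ (forcing $b-a^2>0$); integrating produces the logarithmic formula for $g$. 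If instead $ff''+(f')^2-1\neq0$, then $A,B\neq0$; denoting the constant values by $A=\alpha$, $B=\beta$, the ratio $B/A=\beta/\alpha$ gives $\frac{ff''+(f')^2-1}{\sqrt{1-f'^2}}=\frac{\beta}{\alpha}\kappa$. Since the left side depends only on $u$ and $\kappa$ only on $v$, each equals a constant, whence $\frac{ff''+(f')^2-1}{\sqrt{1-f'^2}}=c$ and $\kappa=const$. The length $\langle H,H\rangle=\frac{\kappa^2-c^2}{4f^2}$ then forces $c\neq0$ and $c^2\neq\kappa^2$, which is case (ii).

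For the meridian in case (ii) I would integrate $ff''+(f')^2-1=c\sqrt{1-f'^2}$ via $f'=\varphi(f)$, obtaining $\frac{t}{2}(\varphi^2)'+\varphi^2-1=c\sqrt{1-\varphi^2}$; the decisive modification of the scheme of Theorem~\ref{T:parallel-H0-a} is to substitute $z(t)=\sqrt{1-\varphi^2}$ instead of $\sqrt{\varphi^2+1}$, which linearizes the equation to $z'+\frac{1}{t}z=-\frac{c}{t}$ with solution $z=\frac{a-ct}{t}$, hence $\varphi(t)=\pm\frac{1}{t}\sqrt{t^2-(a-ct)^2}$, exactly the claimed formula. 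I would close by proving the converse and the non-parallelism of $H$ through direct substitution: in case (i) one computes $D_XH=-\frac{\kappa f'}{2f^2}n_1$, which is not identically zero since $\kappa\neq0$; in case (ii), where $H=\frac{\kappa}{2f}n_1+\frac{c}{2f}n_2$, one gets $D_XH=-\frac{f'}{2f^2}(\kappa\,n_1+c\,n_2)\neq0$ and $D_YH=0$; in either case $H$ fails to be parallel. The only genuinely delicate step, as opposed to routine computation, is keeping the signs of the neutral metric consistent and recognizing that the radicand $1-f'^2$ in \eqref{E:Eq-Hc} --- in contrast with the $f'^2+1$ of Theorem~\ref{T:parallel-H0-a} --- is what forces the substitution $z=\sqrt{1-\varphi^2}$, the right-hand side $-\frac{c}{t}$, and the expression $t^2-(a-ct)^2$ beneath the root.
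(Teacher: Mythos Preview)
Your proof is correct and follows exactly the approach the paper intends: a direct adaptation of the argument for Theorem~\ref{T:parallel-H0-a}, with the signs adjusted to the normal frame $\langle n_1,n_1\rangle=1$, $\langle n_2,n_2\rangle=-1$ of $\mathcal{M}''$ and with the substitution $z=\sqrt{1-\varphi^2}$ replacing $z=\sqrt{\varphi^2+1}$. The one cosmetic point is that in case~(ii) the condition $c\neq0$ actually follows from the case assumption $ff''+(f')^2-1\neq0$ (via $ff''+(f')^2-1=c\sqrt{1-f'^2}$) rather than from $\langle H,H\rangle\neq0$, but your conclusions are all correct.
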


\vskip 2mm
\begin{rem}
All theorems stated in this section give examples of Lorentz surfaces in the pseudo-Euclidean space $\E^4_2$ which have parallel normalized mean curvature vector field   but not parallel mean curvature vector field.
\end{rem}

 \vskip 5mm \textbf{Acknowledgments:}
The second author is partially supported by the National Science Fund,
Ministry of Education and Science of Bulgaria under contract
DFNI-I 02/14.

The paper is prepared
during the first author's visit at the Institute of
Mathematics and Informatics at the Bulgarian Academy of Sciences,
Sofia, Bulgaria  in December 2015.

\end{document}